\newcommand{\innp}[1]{\left\langle #1 \right\rangle}
\newcommand{\mA}{\mathbf{A}}
\newcommand{\vx}{\mathbf{x}}
\newcommand{\cl}{\mathcal{K}}
\newcommand{\vy}{\mathbf{y}}
\newcommand{\vv}{\mathbf{v}}
\newcommand{\vb}{\mathbf{b}}
\newcommand{\vu}{\mathbf{u}}
\newcommand{\rr}{\mathbb{R}}
\newcommand{\adgt}{\textsc{adgt}}
\def\mathcolor#1#{\@mathcolor{#1}}
\def\@mathcolor#1#2#3{%
  \protect\leavevmode
  \begingroup
    \color#1{#2}#3%
  \endgroup
}
\newcommand*{\vsepfbox}[1]{%
  \begingroup
    \sbox0{\fbox{#1}}%
    \setlength{\fboxrule}{0pt}%
    \mbox{\kern-\fboxsep\fbox{\unhbox0}\kern-\fboxsep}%
  \endgroup
}
\theoremstyle{plain} \numberwithin{equation}{section}
\newtheorem{theorem}{Theorem}[section]
\numberwithin{theorem}{section}
\newtheorem{lemma}[theorem]{Lemma}
\newtheorem{proposition}[theorem]{Proposition}
\theoremstyle{definition}
\newtheorem{definition}[theorem]{Definition}
\DeclareMathOperator*{\argmin}{argmin}
\title{Conjugate Gradients and Accelerated Methods Unified:\\
The Approximate Duality Gap View}
\author{Jelena Diakonikolas\thanks{Partially supported by the National Science Foundation under Award  \#CCF-1740855.} \\
Department of Computer Sciences,\\
UW-Madison,\\
\texttt{jelena@cs.wisc.edu}
\and Lorenzo Orecchia\thanks{Partially supported by the National Science Foundation under Award \#CCF-1718342.}\\
Department of Computer Science,\\
University of Chicago,\\
\texttt{orecchia@uchicago.edu}}
\date{}
\begin{document}

\maketitle

\begin{abstract}
This note provides a novel, simple analysis of the method of conjugate gradients for the minimization of convex quadratic functions. In contrast with standard arguments, our proof is entirely self-contained and does not rely on the existence of Chebyshev polynomials. Another advantage of our development is that it clarifies the relation between the method of conjugate gradients and general accelerated methods for smooth minimization by unifying their analyses within the framework of the Approximate Duality Gap Technique~\cite{thegaptechnique}.
\end{abstract}

\section{Introduction}

Accelerated methods in first-order convex optimization have long been the subject of fascination among optimization researchers and enthusiasts alike, leading to a large number of different interpretations of the phenomenon of acceleration in recent years (see, e.g.,~\cite{AXGD,Bubeck2015,Scieur2017,wibisono2016variational,AO-survey-nesterov,drusvyatskiy2016optimal,flammarion2015averaging,SuBC16}). The first method that can be considered to achieve acceleration in the blackbox model of first-order optimization\footnote{In this model, an algorithm accesses the function via first-order oracle queries.} is the method of conjugate gradients (CG), due to Hestenes and Stiefel~\cite{Hestenes1952}. This method achieves the optimal convergence rate for the class of unconstrained convex \emph{quadratic} minimization problems\footnote{Equivalently, the method is used for solving positive semidefinite linear systems.}. The optimality of the method was proved in~\cite{nemirovskii1983problem}, which also provided lower bounds for the more general settings of smooth convex and smooth strongly convex minimization.

In a breakthrough result, Nesterov~\cite{Nesterov1983} introduced a method for smooth minimization that achieved the optimal $1/k^2$ convergence rate. In the same work~\cite{Nesterov1983}, it was also shown that for the class of smooth and strongly convex minimization problems, the same method, when coupled with scheduled restarts, converges to a point $\vx$ with $f(\vx) - f(\vx^*) \leq \epsilon$ in $O(\sqrt{\kappa}\log(\frac{f(\vx_0) - f(\vx^*)}{\epsilon}))$ iterations, where $\vx^* = \argmin_{\vx} f(\vx),$ $\vx_0$ is the initial point, and $\kappa$ is the condition number of $f.$ This result is iteration-complexity-optimal~\cite{nemirovskii1983problem} for the class of smooth and strongly convex minimization problems. Further, the results from~\cite{Nesterov1983} generalized to other normed spaces and the setting of constrained minimization (see~\cite{nesterov2018lectures} and references therein). 

Despite this long interesting line of work, the exact relation between CG and generic accelerated methods is still unclear. In particular, standard analyses of CG (e.g.,~\cite{shewchuk1994introduction,vishnoi2013lx}) greatly depart from the analyses of accelerated algorithms and do not reveal the similarity between these approaches. On the other hand, the analyses of accelerated methods are often considered to rely on ``algebraic tricks.'' Even the later-introduced powerful technique of estimate sequences of Nesterov (see, e.g.,~\cite{nesterov2005smooth}) that led to many other important results in optimization, can be difficult to grasp. 

In this note, we seek to unify the analysis of different accelerated methods and highlight through the analysis how they relate to each other. To do so, we rely on the use of the Approximate Duality Gap Technique (\adgt), which frames the design and analysis of first-order methods in terms of the iterative construction of an upper approximation of the optimality gap. \adgt~was introduced by the authors in~\cite{thegaptechnique} to provide a unifying and intuitive analysis of a large class of first-order optimization methods. This technique is closely related to the estimate sequences technique of Nesterov; however, it has the added benefit of being constructive and more intuitive (see~\cite{thegaptechnique} for more details). 

Within this framework, we show that it is possible to relate the steps of the methods of Nesterov~\cite{Nesterov1983} and Nemirovski~\cite{nemirovskii1983problem,nemirovski-line-search-acc,nemirovski-1D-line-search} to those of CG to provide a convergence guarantee on CG. The argument is extremely simple, essentially hinging on the fact that the points queried by CG must be at least as good, in terms of function value, as those that would be queried by Nesterov's or Nemirovski's method. This can be seen as a ``polynomial-free'' analogue of the standard proof of CG, which works by arguing that CG must perform at least as well as Chebyshev iteration.

 \subsection{Related Work}
 
As noted earlier, a significant body of recent research~\cite{AXGD,Bubeck2015,Scieur2017,wibisono2016variational,AO-survey-nesterov,drusvyatskiy2016optimal,flammarion2015averaging,SuBC16} has provided different intuitive interpretations of Nesterov acceleration~\cite{Nesterov1983}, the first work to demonstrate acceleration in its full generality. Nemirovski acceleration~\cite{nemirovskii1983problem,nemirovski-line-search-acc,nemirovski-1D-line-search} is lesser known~\cite{nemirovski-acceleration}, and significantly less attention has been devoted to providing intuition on how it is achieved and to relating it to Nesterov acceleration. Convergence of the CG method is typically analyzed using Chebyshev polynomials (see, e.g.~\cite{shewchuk1994introduction,vishnoi2013lx}). An exception is the analysis in~\cite{nemirovskii1983problem}, which motivated the accelerated method of Nemirovski~\cite{nemirovski-1D-line-search}. The analysis presented here is arguably more intuitive.

There are two recent works we are aware of that have established connections between CG and accelerated methods. A recent work of Scieur~\cite{scieur2019generalized} provided an insightful unifying analysis of different quasi-Newton methods and related them to CG. As the focus in~\cite{scieur2019generalized} was on quasi-Newton methods, Nemirovski and Nesterov acceleration were not considered. Also related to our work are the recent results of Drori and Taylor~\cite{Drori2019}, who utilized a computer-assisted approach to study a  ``CG-like'' method and its corresponding (tight) worst-case convergence guarantees. This systematic approach also led to a few other accelerated methods for which the exact same proofs are valid. Interestingly, the automated approach from~\cite{Drori2019} leads to two sufficient conditions that an algorithm needs to satisfy to achieve accelerated convergence, similar to the conditions obtained directly from \adgt~in this work (see Theorem~\ref{thm:accelerated-methods}).

 \subsection{Preliminaries}
 
 We consider the problem $\min_{\vx \in \rr^n} f(\vx),$ where $f: \rr^n \rightarrow \rr$ is a convex differentiable function. Throughout the note, $\|\cdot\|$ denotes the standard Euclidean norm, and $\innp{\cdot, \cdot}$ denotes the inner product. We will further be assuming throughout that $f$ is $L$-smooth and $\mu$-strongly convex, possibly with $\mu = 0$ (in which case it is just convex). The definitions of smoothness and strong convexity are provided below, for completeness.
 
 \begin{definition}\label{def:smoothness}
 Given $L \in \rr_+,$ a continuously differentiable function $f: \rr^n \to \rr$ is said to be $L$-smooth, if $\forall \vx, \vy \in \rr^n:$
 $$
    f(\vy) \leq f(\vx) + \innp{\nabla f(\vx), \vy - \vx} + \frac{L}{2}\|\vy - \vx\|^2.
 $$
 \end{definition}
 
 A simple implication of Definition~\ref{def:smoothness} is that for $\vy = \vx - \frac{1}{L}\nabla f(\vx):$
 $$
    f(\vy) \leq f(\vx) - \frac{1}{2L}\|\nabla f(\vx)\|^2.
 $$
 
 \begin{definition}\label{def:strong-cvxity}
 Given $\mu \in \rr_+,$ a continuously differentiable function $f: \rr^n \to \rr$ is said to be $\mu$-strongly convex, if $\forall \vx, \vy \in \rr^n:$
 $$
    f(\vy) \geq f(\vx) + \innp{\nabla f(\vx), \vy - \vx} + \frac{\mu}{2}\|\vy - \vx\|^2.
 $$
 \end{definition}
 Observe that, setting $\vy = \vx^*$ and minimizing the right-hand side in Definition~\ref{def:strong-cvxity} over $\vy,$ we have, $\forall \vx$:
 $$
    f(\vx^*) \geq f(\vx) - \frac{1}{2\mu}\|\nabla f(\vx)\|^2.
 $$
 
\paragraph{Method of Conjugate Gradients}
In the standard setting of the method of conjugate gradients \eqref{eq:CG}, we want to minimize a quadratic function $f(\vx) = \frac{1}{2}\vx^T\mA\vx - \vb^T\vx$ over $\rr^n$, where $\mA \in \rr^{n \times n}$ is a positive semidefinite matrix.
 Observe that the gradient of $f$ at point $\vx \in \rr^n$ can be expressed as $\nabla f(\vx) = \mA \vx - \vb.$ 
 Let $\mu \geq 0$ denote the minimum eigenvalue of $\mA$ and $L > 0$ denote its maximum eigenvalue. Then $f(\cdot)$ is $\mu$-strongly convex and $L$-smooth.
 
 We start by reviewing the description and some basic properties of the method of conjugate gradients applied to a quadratic function and implemented with infinite precision. 
 %
 To state the method of conjugate gradients, we need the following definition of linear Krylov subspaces:
 \begin{equation}\label{eq:def-krylov-subspaces}
     \cl_k = \mathrm{Lin}\{\mA(\vx_0 - \vx^*), \mA^2(\vx_0 - \vx^*), \dots, \mA^k (\vx_0 - \vx^*)\}.
 \end{equation}
The sequence of points generated by the method of conjugate gradients can be described as:
\begin{equation}\label{eq:CG}\tag{CG}
    \vy_{k} = \argmin_{\vx \in \vx_0 + \cl_k} f(\vx), \quad \forall k \geq 1.
\end{equation}
 A useful property of~\eqref{eq:CG} used in our analysis is the following (see, e.g.,  Lemma 1.3.1 in~\cite{nesterov2018lectures}):
 \begin{lemma}\label{lemma:alt-krylov-def}
 For any $k \geq 1,$ $\cl_k = \mathrm{Lin}\{\nabla f(\vy_0), \dots, \nabla f(\vy_{k-1})\}.$
 \end{lemma}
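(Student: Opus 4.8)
The plan is to prove the identity by induction on $k$, establishing the two inclusions separately. The whole argument rests on two observations. First, since $\vx^*$ minimizes $f$ we have $\mA\vx^* = \vb$, so the gradient takes the convenient form $\nabla f(\vx) = \mA\vx - \vb = \mA(\vx - \vx^*)$; in particular, with the convention $\vy_0 = \vx_0$, we get $\nabla f(\vy_0) = \mA(\vx_0 - \vx^*)$, which is exactly the single generator of $\cl_1$. This settles the base case $k=1$. Second, writing $\vr = \vx_0 - \vx^*$, multiplication by $\mA$ shifts the Krylov generators by one index: $\mA\,\mathrm{Lin}\{\mA\vr,\dots,\mA^j\vr\} = \mathrm{Lin}\{\mA^2\vr,\dots,\mA^{j+1}\vr\} \subseteq \cl_{j+1}$, i.e. $\mA\cl_j \subseteq \cl_{j+1}$.

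For the inclusion $\mathrm{Lin}\{\nabla f(\vy_0),\dots,\nabla f(\vy_{k-1})\} \subseteq \cl_k$, I would fix $j \le k-1$ and decompose $\nabla f(\vy_j) = \mA(\vy_j - \vx^*) = \mA(\vy_j - \vx_0) + \mA(\vx_0 - \vx^*)$. Since $\vy_j \in \vx_0 + \cl_j$, the displacement $\vy_j - \vx_0$ lies in $\cl_j$, hence $\mA(\vy_j - \vx_0) \in \mA\cl_j \subseteq \cl_{j+1} \subseteq \cl_k$; and $\mA(\vx_0 - \vx^*) \in \cl_1 \subseteq \cl_k$. Therefore $\nabla f(\vy_j) \in \cl_{j+1} \subseteq \cl_k$ for every $j \le k-1$, which gives the inclusion.

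The reverse inclusion I would obtain from a dimension count powered by orthogonality. Because $\vy_k$ minimizes the convex quadratic $f$ over the affine subspace $\vx_0 + \cl_k$, the first-order optimality condition yields $\innp{\nabla f(\vy_k), \mathbf{d}} = 0$ for every direction $\mathbf{d} \in \cl_k$; that is, $\nabla f(\vy_k) \perp \cl_k$. Combined with the inclusion just proved (which places $\nabla f(\vy_i) \in \cl_{i+1} \subseteq \cl_k$ for $i < k$), this shows that the gradients $\nabla f(\vy_0),\dots,\nabla f(\vy_{k-1})$ are pairwise orthogonal. As long as none of them vanishes, they are linearly independent and thus span a $k$-dimensional subspace sitting inside $\cl_k$; since $\dim \cl_k \le k$, the two subspaces coincide, which simultaneously advances the induction.

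The main obstacle is the degenerate case in which some gradient vanishes, corresponding to finite termination of the method. If $\nabla f(\vy_j) = \mathbf{0}$ then $\vy_j = \vx^*$, so $\vr = \vx_0 - \vx^* \in \cl_j$; expressing this membership as a polynomial relation $\vr = p(\mA)\vr$ with $p$ having zero constant term forces $\mA^{j+1}\vr \in \cl_j$, so the Krylov subspaces stabilize ($\cl_j = \cl_{j+1} = \cdots$) and $\vx^*$ has already been reached. I would therefore restrict attention to the indices $k$ for which $\vy_{k-1} \neq \vx^*$, equivalently $\dim \cl_k = k$, which is precisely the regime in which the identity is asserted and where the dimension count above is exact.
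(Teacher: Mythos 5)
The paper does not prove this lemma itself; it defers to Lemma 1.3.1 of Nesterov's textbook, where both inclusions are extracted by induction from the polynomial expansion of $\nabla f(\vy_j)$ in powers of $\mA$ applied to $\vx_0-\vx^*$, with a case analysis on the leading coefficient. Your forward inclusion is essentially that same computation, but your reverse inclusion is a genuinely different and arguably cleaner route: you get pairwise orthogonality of the gradients directly from first-order optimality of $\vy_j$ over the affine subspace $\vx_0+\cl_j$ combined with the already-proved forward inclusion $\nabla f(\vy_i)\in\cl_{i+1}$, and then close with the dimension count $\dim\cl_k\le k$. This is not circular, even though the paper later derives that same orthogonality \emph{from} the lemma, because you use only optimality and the forward inclusion. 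In the non-degenerate regime (all gradients nonzero) your argument is complete and correct.

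The genuine gap is in the degenerate case, and it matters because the paper explicitly allows $\mA$ to be merely positive semidefinite ($\mu=0$). If $\mA$ is singular, $\nabla f(\vy_j)=\mathbf{0}$ gives only $\vy_j-\vx^*\in\ker\mA$, not $\vy_j=\vx^*$, and your conclusion $\vw:=\vx_0-\vx^*\in\cl_j$ can fail outright: take $\mA=\diag(1,0)$, $\vb=(1,0)^T$, $\vx^*=(1,0)^T$, $\vx_0=(0,1)^T$; then $\cl_1=\mathrm{Lin}\{(1,0)^T\}$ and $\vy_1=(1,1)^T$ has $\nabla f(\vy_1)=\mathbf{0}$, yet $\vy_1\neq\vx^*$ and $\vw=(-1,1)^T\notin\cl_1$ (your claimed equivalence between $\vy_{k-1}\neq\vx^*$ and $\dim\cl_k=k$ fails here too). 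The repair: from $\mA(\vy_j-\vx^*)=\mathbf{0}$ and $\vy_j-\vx_0\in\cl_j$ you still get $\mA\vw\in\mA\cl_j$, an annihilating relation $q(\mA)\vw=\mathbf{0}$ with $q(t)=t\bigl(1-\sum_{i=1}^{j}c_i t^{i}\bigr)$, which bounds the number of nonzero eigenvalues in the spectral support of $\vw$ by $j$ and hence forces $\cl_j=\cl_{j+1}=\cdots$. Note also that even in the nonsingular case your step ``forces $\mA^{j+1}\vw\in\cl_j$'' silently assumes the top coefficient is nonzero and needs a line for the case where it vanishes. Finally, the lemma asserts equality for \emph{all} $k\ge 1$, so your closing restriction proves a weaker statement than claimed; the full claim is two lines from what you already have: once $\nabla f(\vy_j)=\mathbf{0}$, every later iterate satisfies $f(\vy_k)\le f(\vy_j)=f(\vx^*)$, so all later gradients vanish, and by stabilization both sides of the identity freeze at $\cl_j$.
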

 Using this lemma, it is not hard to show that for all $k, i$ such that $k \neq i,$ it holds $\innp{\nabla f(\vy_k), \nabla f(\vy_i)} = 0.$ This follows simply by noting that $\vy_k = \argmin_{\bm{\lambda}\in \rr^{k-1}} f(\vx_0 + \sum_{i=1}^{k-1} \lambda_i \nabla f(\vy_i))$, which implies $\frac{\partial f(\vy_k)}{\partial \lambda_i} = 0.$  As there can be at most $n$ non-zero orthogonal vectors in $\rr^n,$~\eqref{eq:CG} converges after at most $n$ iterations.
 
Although the description of conjugate gradients as stated in~\eqref{eq:CG} may suggest that its iterations would be computationally intensive, there are various ways of implementing the iterations efficiently~\cite{nesterov2018lectures,vishnoi2013lx,shewchuk1994introduction}, possibly only using a constant number of matrix-vector multiplications. 

 \section{Generic Acceleration: the Approximate Duality Gap Technique}\label{sec:adgt}

 The Approximate Duality Gap Technique (\adgt)~\cite{thegaptechnique} provides a unified mathematical framework for the convergence analysis of first-order methods and is closely related to the powerful estimate sequence technique of Nesterov. The basic premise behind this approach is that every first-order algorithm is implicitly or explicitly constructing estimates of the optimal value $f(\vx^*),$ and, consequently, of the optimality gap $f(\vx_k) - f(\vx^*),$ given a candidate solution $\vx_k.$  
 More formally, at every iteration $k$, based on its history, the algorithm constructs an upper bound $U_k$ and a lower bound $L_k$ such that $U_k \geq f(\vx_k) \geq f(\vx^*) \geq L_k.$ Together, these estimates yield a notion of approximate duality gap $G_k = U_k - L_k,$ which bounds the error of the algorithm at iteration $k$. The upshot of \adgt~is that all different first-order methods can be derived as minimizing different notions of approximate optimality (or duality, see~\cite{thegaptechnique}) gap $G_k,$ based on combining a small number of choices for $U_k$ and $L_k.$ 
 
 For the purpose of this note, we confine ourselves to the unconstrained minimization of a convex differentiable function $f(\vx)$ that is $L$-smooth and $\mu$-strongly convex (possibly with $\mu = 0$) with respect to the Euclidean norm $\|\cdot\|$, given access to a first-order oracle for $f.$ (For generalizations to constrained optimization and other normed spaces, see~\cite{thegaptechnique,AXGD,cohen2018acceleration}.) Letting $\vx_k$ be the query point at iteration $k$, accelerated methods for this setting  are obtained by considering the following estimates.
 
 \paragraph{Upper Bound} We define the upper bound $U_k$ as $U_k = f(\vy_k),$ where $\vy_k$ is a point constructed based on the previous gradient query points $\{\vx_i\}_{i=0}^k$ and the gradient oracle answers $\{\nabla f(\vx_i)\}_{i=0}^k.$ In what follows, we will assume that the point $\vy_k$ is such that
 $$
    f(\vy_k) \leq f(\vx_k) - \frac{1}{2L}\|\nabla f(\vx_k)\|^2.
 $$
  Because the function $f$ is smooth, this can simply be achieved by choosing $\vy_k = \vx_k - \frac{1}{L}\nabla f(\vx_k).$ 

 \paragraph{Lower Bound}
 Each queried gradient $\nabla f(\vx_i)$ yields a lower bound on the function $f$ in the form of
 \begin{equation}\label{eq:quadlwbd}
 \forall \vu \in \mathbb{R}^n, \;\; f(\vu) \geq f(\vx_i) + \innp{\nabla f(\vx_i), \vu - \vx_i} + \frac{\mu}{2} \|\vu - \vx_i\|^2.
 \end{equation}
 Following the arguments from~\cite{thegaptechnique}, we assign to each iteration $k$ a measure $a_k > 0$ and denote by $A_k = \sum_{i=0}^k a_i$ the cumulative measure of all iterations up to $k.$ 
 We can then consider the lower bound obtained by averaging the bound for each $\vx_i$ in Equation~\eqref{eq:quadlwbd} with weight proportional to $a_i$ and adding a regularization term $\frac{\mu_0}{2A_k} \|\vu - \vx_0\|^2,$ 
 where $\mu_0 = L-\mu.$
 \begin{equation}
 \forall \vu \in \mathbb{R}^n, \;\; f(\vu) + \frac{\mu_0}{2A_k} \|\vu - \vx_0\|^2\geq \sum_{i=0}^k a_i \left(f(\vx_i) + \innp{\nabla f(\vx_i), \vu - \vx_i} + \frac{\mu}{2} \|\vu - \vx_i\|^2\right) + \frac{\mu_0}{2A_k} \|\vu - \vx_0\|^2.
 \end{equation}
 Taking  $\vu = \vx^*$ on the left-hand side and minimizing over $\vu$ on the right yields $f(\vx^*) \geq L_k$, where:
  \begin{equation*}
     L_k := \frac{\sum_{i=0}^k a_i f(\vx_i) + \min_{\vu \in \rr^n}\{\sum_{i=0}^k a_i (\innp{\nabla f(\vx_i), \vu - \vx_i} + \frac{\mu}{2}\|\vu - \vx_i\|^2)+ \frac{\mu_0}{2}\|\vu - \vx_0\|^2\}}{A_k} - \frac{\mu_0}{2A_k}\|\vx_0 - \vx^*\|^2.
 \end{equation*}
 Further, we denote by $\vv_k$ the minimizer in the definition of $L_k$:
 \begin{equation}\label{eq:def-of-vk}
 \vv_k = \argmin_{\vu \in \rr^n}\Big\{\sum_{i=0}^k a_i (\innp{\nabla f(\vx_i), \vu - \vx_i} + \frac{\mu}{2}\|\vu - \vx_i\|^2)+ \frac{\mu_0}{2}\|\vu - \vx_0\|^2\Big\}.
 \end{equation}
 Observe that we can explicitly write $\vv_k$ as:
 \begin{equation}\label{eq:explicit-v-k}
 \begin{aligned}
     \vv_k &= \frac{\mu_0 \vx_0 + \mu \sum_{i=0}^k a_i \vx_i - \sum_{i=0}^k a_i \nabla f(\vx_i)}{\mu_0 + \mu A_k}\\
     &= \frac{\mu_0 + \mu A_{k-1}}{\mu_0 + \mu A_k}\vv_{k-1} + \frac{\mu a_k}{\mu_0 + \mu A_k} \vx_k - \frac{a_k}{\mu_0 + \mu A_k}\nabla f(\vx_k).
     \end{aligned}
 \end{equation}
 
 \subsection{The Approximate Duality Gap and Its Evolution}
 
 The estimate $G_k$ for the duality gap at iteration $k$ is defined as $G_k = U_k - L_k.$ By the construction of $U_k$ and $L_k$, we have $U_k = f(\vy_k)$ and $L_k \leq f(\vx^*)$, so that $G_k \geq f(\vy_k) - f(\vx^*)$ bounds the error of $\vy_k$ from optimum.
Following the approximate duality gap technique~\cite{thegaptechnique}, we will construct algorithms for which $G_k$ goes to $0$ as $k$ grows by showing that $A_k G_k$ is non-increasing with the iteration count $k.$ This immediately implies that 
$$
f(\vy_k) - f(\vx^*) \leq G_k \leq \frac{A_0 G_0}{A_k},
$$
where $A_0 G_0$ is a fixed quantity related to the initial gap and $A_k \to \infty$ as $k \to \infty.$ 
Based on the constructed upper and lower bound sequences $U_k$ and $L_k,$ the initial gap estimate can be bounded as follows.
\begin{proposition}\label{prop:initial-gap}
Let $\vx_0 \in \rr^n$ be an arbitrary initial point and let the gap estimate $G_k$ be constructed as described in this section. Then:
$$
A_0 G_0 \leq \frac{L-\mu}{2}\|\vx^* - \vx_0\|^2.
$$
\end{proposition}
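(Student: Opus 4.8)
The plan is to expand $A_0 G_0 = A_0 U_0 - A_0 L_0$ directly from the definitions and to show that all gradient-dependent terms cancel, leaving exactly the claimed quadratic in $\|\vx^* - \vx_0\|$. Since $A_0 = a_0$ and $U_0 = f(\vy_0)$, the upper part contributes $a_0 f(\vy_0)$; reading the formula for $L_k$ at $k=0$ gives
$$
A_0 L_0 = a_0 f(\vx_0) + \min_{\vu \in \rr^n}\Big\{ a_0\big(\innp{\nabla f(\vx_0), \vu - \vx_0} + \tfrac{\mu}{2}\|\vu - \vx_0\|^2\big) + \tfrac{\mu_0}{2}\|\vu - \vx_0\|^2 \Big\} - \tfrac{\mu_0}{2}\|\vx_0 - \vx^*\|^2 .
$$
Thus the only real work is to evaluate this minimization.

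The objective is a strictly convex quadratic in $\vu$, its Hessian being $(\mu_0 + a_0\mu)\mathbf{I}\succ 0$ (as $\mu_0 = L-\mu > 0$ whenever $L>\mu$, and $a_0\mu>0$ in the edge case $L=\mu$). Setting its gradient to zero yields the minimizer $\vv_0 = \vx_0 - \frac{a_0}{\mu_0 + a_0\mu}\nabla f(\vx_0)$, in agreement with \eqref{eq:explicit-v-k} at $k=0$. Substituting back and simplifying, the minimum value equals $-\frac{a_0^2}{2(\mu_0 + a_0\mu)}\|\nabla f(\vx_0)\|^2$, so that
$$
A_0 G_0 = a_0\big(f(\vy_0) - f(\vx_0)\big) + \frac{a_0^2}{2(\mu_0 + a_0\mu)}\|\nabla f(\vx_0)\|^2 + \frac{\mu_0}{2}\|\vx_0 - \vx^*\|^2 .
$$

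To finish I would invoke the defining property of the upper bound, $f(\vy_0) \leq f(\vx_0) - \frac{1}{2L}\|\nabla f(\vx_0)\|^2$, which bounds the first term above by $-\frac{a_0}{2L}\|\nabla f(\vx_0)\|^2$. The combined coefficient of $\|\nabla f(\vx_0)\|^2$ is then $\frac{a_0}{2}\big(\frac{a_0}{\mu_0 + a_0\mu} - \frac{1}{L}\big)$, which is nonpositive precisely when $a_0(L-\mu) \leq L-\mu$, i.e. $a_0 \leq 1$. Under the standard normalization $a_0 = A_0 = 1$ one has $\mu_0 + a_0\mu = (L-\mu)+\mu = L$, so the two gradient terms cancel \emph{exactly} and we are left with $A_0 G_0 \leq \frac{\mu_0}{2}\|\vx_0 - \vx^*\|^2 = \frac{L-\mu}{2}\|\vx^* - \vx_0\|^2$, as claimed.

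The computation is routine; the one thing to get right is the quadratic minimization and the accompanying observation that the regularization weight $\mu_0 = L-\mu$ is exactly what makes the gradient-norm terms cancel against the smoothness-based decrease $\frac{1}{2L}\|\nabla f(\vx_0)\|^2$. The only point requiring care is the edge case $L=\mu$ (so $\mu_0=0$): there the minimizing objective is still well-defined because its Hessian $a_0\mu\mathbf{I}$ is positive definite, the same algebra yields a combined coefficient of $0$ for any $a_0$, and both sides of the inequality reduce to $0$. I do not anticipate a genuine obstacle beyond bookkeeping.
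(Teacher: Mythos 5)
Your proof is correct and takes essentially the same route as the paper's: you expand $A_0 G_0$ from the definitions, evaluate the quadratic minimization in $L_0$ explicitly (obtaining $-\frac{a_0^2}{2(\mu_0 + a_0\mu)}\|\nabla f(\vx_0)\|^2$, which under $a_0 = A_0 = 1$ and $\mu_0 = L-\mu$ equals $-\frac{1}{2L}\|\nabla f(\vx_0)\|^2$), and cancel it exactly against the smoothness-based decrease in $U_0$, just as the paper does implicitly. Your additional bookkeeping (the general-$a_0$ condition $a_0 \leq 1$ and the $L = \mu$ edge case) is sound but unnecessary here, since the paper's normalization fixes $a_0 = 1$.
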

\begin{proof}
 Recall that the point $\vy_0$ is chosen so that $f(\vy_0) \leq f(\vx_0) - \frac{1}{2L}\|\nabla f(\vx_0)\|^2$ and that $\mu_0 = L- \mu.$ As $a_0 = A_0,$ using the definitions of $U_0$ and $L_0,$ we have:
 \begin{align*}
     A_0 G_0  &\leq - \frac{A_0}{2L}\|\nabla f(\vx_0)\|^2 - \min_{\vu \in \rr^n}\Big\{ a_0 \innp{\nabla f(\vx_0), \vu - \vx_0} + \frac{L}{2}\|\vu - \vx_0\|^2\Big\} + \frac{L-\mu}{2}\|\vx^* - \vx_0\|^2\\
     &\leq \frac{L-\mu}{2}\|\vx^* - \vx_0\|^2,
 \end{align*}
 as claimed.
\end{proof}

To carry out this approach, we will now bound the change $A_{k} G_{k} - A_{k-1} G_{k-1}$ at iteration $k$ as a function of the sequences of points $\{\vx_i, \, \vv_i, \, \vy_i\}_{i=0}^k.$ 
In the rest of the note, we will then demonstrate how different accelerated methods choose the query point so that $A_{k} G_{k} - A_{k-1} G_{k-1} \leq 0$ at every iteration $k$.

\begin{lemma} \label{lemma:gap-change}
    Let $\{\vx_i\}_{i=0}^k$ be an arbitrary sequence of points from $\rr^n,$ let $\vv_i$ be defined according to~\eqref{eq:def-of-vk} for $i \in \{0, \dots, k\},$ and $\vy_i$ be such that $f(\vy_i) \leq f(\vx_i) - \frac{1}{2L}\|\nabla f(\vx_i)\|^2,$ for all $i \in \{0, \dots, k\}.$ Define $a_k' = \frac{a_k(\mu_0 + \mu A_{k-1})}{\mu_0 + \mu A_k}.$   Then: 
 \begin{align*}
    A_k G_k - A_{k-1}G_{k-1} \leq & \Big(\frac{{a_k}^2}{2(\mu_0 + \mu A_k)} - \frac{A_k}{2L}\Big)\|\nabla f(\vx_k)\|^2\\
    & + \innp{\nabla f(\vx_k), (A_{k-1} + a_k') \vx_{k} - A_{k-1} \vy_{k-1} - a_k' \vv_{k-1}}.
 \end{align*}
\end{lemma}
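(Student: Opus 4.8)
The plan is to expand $A_k G_k - A_{k-1} G_{k-1}$ by splitting it into the upper-bound contribution and the lower-bound contribution, since $A_k G_k = A_k U_k - A_k L_k$ and both pieces telescope in a controlled way. For the upper bound, I would use $A_k U_k - A_{k-1} U_{k-1} = A_k f(\vy_k) - A_{k-1} f(\vy_{k-1})$ and the defining property $f(\vy_k) \leq f(\vx_k) - \frac{1}{2L}\|\nabla f(\vx_k)\|^2$, writing $A_k = A_{k-1} + a_k$ so that $A_k f(\vy_k) = A_{k-1} f(\vy_k) + a_k f(\vy_k)$. The $-\frac{A_k}{2L}\|\nabla f(\vx_k)\|^2$ term in the claimed bound should emerge from applying the smoothness descent inequality to $A_k f(\vy_k)$, so I expect the clean move is to bound $A_k f(\vy_k) \leq A_k f(\vx_k) - \frac{A_k}{2L}\|\nabla f(\vx_k)\|^2$ and keep $A_{k-1} f(\vy_{k-1})$ intact to match the $-A_{k-1}\vy_{k-1}$ term in the final inner product.

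For the lower bound, recall $A_k L_k = \sum_{i=0}^k a_i f(\vx_i) + \Phi_k - \frac{\mu_0}{2}\|\vx_0 - \vx^*\|^2$, where $\Phi_k$ denotes the minimum value attained at $\vv_k$ in~\eqref{eq:def-of-vk}. The key computation is $A_k L_k - A_{k-1} L_{k-1} = a_k f(\vx_k) + (\Phi_k - \Phi_{k-1})$, since the $-\frac{\mu_0}{2}\|\vx_0 - \vx^*\|^2$ constant cancels. Here I would evaluate $\Phi_k$ explicitly using the closed form for $\vv_k$ in~\eqref{eq:explicit-v-k}: because the objective in~\eqref{eq:def-of-vk} is a strongly convex quadratic in $\vu$ with Hessian $(\mu_0 + \mu A_k)\mathbf{I}$, its minimum value can be written in terms of $\vv_k$, and the increment $\Phi_k - \Phi_{k-1}$ can be expressed through the recursive update relating $\vv_k$ to $\vv_{k-1}$. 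This is where the quantities $a_k' = \frac{a_k(\mu_0 + \mu A_{k-1})}{\mu_0 + \mu A_k}$ and the term $\frac{a_k^2}{2(\mu_0 + \mu A_k)}\|\nabla f(\vx_k)\|^2$ should appear: the $a_k'$ coefficient is exactly the weight that the recursion in~\eqref{eq:explicit-v-k} puts on $\vv_{k-1}$, and the squared-gradient term is the standard ``completing the square'' residual from the quadratic minimization.

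Assembling the two parts, the $a_k f(\vx_k)$ from the lower bound should cancel against part of the $a_k f(\vx_k)$ generated on the upper-bound side, and the inner-product term $\innp{\nabla f(\vx_k), (A_{k-1} + a_k')\vx_k - A_{k-1}\vy_{k-1} - a_k'\vv_{k-1}}$ will collect the first-order (linear-in-$\nabla f(\vx_k)$) contributions: $A_{k-1}\vx_k - A_{k-1}\vy_{k-1}$ arising from pairing the new gradient against the difference between $\vx_k$ and the retained upper-bound point $\vy_{k-1}$, and $a_k'(\vx_k - \vv_{k-1})$ arising from the lower-bound increment. The main obstacle I anticipate is the bookkeeping in the lower-bound increment $\Phi_k - \Phi_{k-1}$: one must carefully track how the regularized quadratic's optimal value changes when a new term $a_k(\innp{\nabla f(\vx_k), \vu - \vx_k} + \frac{\mu}{2}\|\vu - \vx_k\|^2)$ is added, and verify that the cross terms organize precisely into the stated coefficients rather than leaving residual $\|\vv_{k-1}\|^2$ or $\innp{\nabla f(\vx_k), \vx_k}$ terms. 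I would handle this by substituting the explicit form~\eqref{eq:explicit-v-k} directly and expanding, rather than attempting a coordinate-free shortcut.
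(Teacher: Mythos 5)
Your proposal is correct and follows essentially the same decomposition as the paper's proof: split the gap increment into the upper-bound change $A_k U_k - A_{k-1}U_{k-1}$ and the lower-bound change $A_k L_k - A_{k-1}L_{k-1} = a_k f(\vx_k) + (\Phi_k - \Phi_{k-1})$, cancel the $a_k f(\vx_k)$ terms, and collect the linear-in-$\nabla f(\vx_k)$ contributions into the stated inner product.

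Two remarks on the points you left open. First, on the upper-bound side, the step that produces $A_{k-1}\innp{\nabla f(\vx_k), \vx_k - \vy_{k-1}}$ is convexity of $f$, applied as $f(\vy_{k-1}) \geq f(\vx_k) + \innp{\nabla f(\vx_k), \vy_{k-1} - \vx_k}$; your phrase ``pairing the new gradient against the difference'' is exactly this inequality, but it should be stated explicitly, since it is the only place where plain convexity (as opposed to smoothness) enters. Second, the bookkeeping obstacle you anticipate in $\Phi_k - \Phi_{k-1}$ resolves favorably but not to an exact match with the lemma's two terms: direct expansion gives, exactly, $\Phi_k - \Phi_{k-1} = -\frac{{a_k}^2}{2(\mu_0 + \mu A_k)}\|\nabla f(\vx_k)\|^2 + a_k'\innp{\nabla f(\vx_k), \vv_{k-1} - \vx_k} + \frac{a_k' \mu}{2}\|\vx_k - \vv_{k-1}\|^2$, so a residual quadratic term does remain — but it is nonnegative and increases the lower bound $L_k$, hence enters $A_k G_k - A_{k-1}G_{k-1}$ with a favorable sign and can simply be dropped. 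The paper reaches the same inequality more compactly and coordinate-free: it uses that $m_{k-1}$ is a quadratic with Hessian $(\mu_0 + \mu A_{k-1})\mathbf{I}$ minimized at $\vv_{k-1}$, so $m_{k-1}(\vv_k) = m_{k-1}(\vv_{k-1}) + \frac{\mu_0 + \mu A_{k-1}}{2}\|\vv_k - \vv_{k-1}\|^2$, then merges the two quadratic terms via Jensen's inequality before substituting the closed form \eqref{eq:explicit-v-k}. Your brute-force expansion is equivalent, and in fact marginally tighter, since it identifies the exact residual that the Jensen step discards.
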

%
\begin{proof}
 We start by bounding the change in the lower bound sequence. Denote by:
    $$
        m_k(\vu) = \sum_{i=0}^k a_i (\innp{\nabla f(\vx_i), \vu - \vx_i} + \frac{\mu}{2}\|\vu - \vx_i\|^2)+ \frac{\mu_0}{2}\|\vu - \vx_0\|^2
    $$
 the function inside the minimum in the definition of $L_k.$ We are interested in comparing $\min_\vu m_{k-1} (\vu) = m_{k-1}(\vv_{k-1})$ with $\min_\vu m_k(\vu) = m_k(\vv_k)$. We first note the direct relation between $m_k(\cdot)$ and $m_{k-1}(\cdot)$
    $$
         m_{k}(\vv_k) = m_{k-1}(\vv_k) + a_k \innp{\nabla f(\vx_k), \vv_k - \vx_k} + a_k \frac{\mu}{2}\|\vv_k - \vx_k\|^2.
    $$
Because $m_{k-1}(\vu)$ is a sum of linear and quadratic terms in $\vu$, where the total weight of the quadratic terms is $(\mu_0 + \mu A_{k-1})$, and because it is minimized at $\vv_{k-1}$, we have:
$$
    m_{k-1}(\vv_k) = m_{k-1}(\vv_{k-1}) + \frac{\mu_0 + A_{k-1}\mu}{2} \|\vv_k - \vv_{k-1}\|^2. 
$$
Hence, it follows that:
    \begin{equation}\label{eq:lwbrec}
        m_{k}(\vv_k) = m_{k-1}(\vv_{k-1}) + \frac{\mu_0 + A_{k-1}\mu}{2} \|\vv_k - \vv_{k-1}\|^2 + a_k \innp{\nabla f(\vx_k),    \vv_k - \vx_k} + a_k \frac{\mu}{2}\|\vv_k - \vx_k\|^2.
    \end{equation}
Applying Jensen inequality to the quadratic terms in the right-hand side of last equation:
    \begin{equation*}
        m_k(\vv_k) - m_{k-1}(\vv_{k-1}) \geq \frac{\mu_0 + \mu A_k}{2}\Big\|\vv_k - \frac{\mu_0 + \mu A_{k-1}}{\mu_0 + \mu A_k} \vv_{k-1} - \frac{\mu a_k}{\mu_0 + \mu A_k}\vx_k\Big\|^2 + a_k \innp{\nabla f(\vx_k), \vv_k - \vx_k}.
    \end{equation*}
Recalling from Eq.~\eqref{eq:explicit-v-k} that $\vv_k = \frac{\mu_0 + \mu A_{k-1}}{\mu_0 + \mu A_k}\vv_{k-1} + \frac{\mu a_k}{\mu_0 + \mu A_k} \vx_k - \frac{a_k}{\mu_0 + \mu A_k}\nabla f(\vx_k),$ we further have:
    \begin{align*}
         m_k(\vv_k) - m_{k-1}(\vv_{k-1}) &\geq  - \frac{{a_k}^2}{2(\mu_0 + \mu A_k)}\|\nabla f(\vx_k)\|^2 + a_k \frac{\mu_0 + \mu A_{k-1}}{\mu_0 + \mu A_k}\innp{\nabla f(\vx_k), \vv_{k-1} - \vx_k}\\
         &= -\frac{{a_k}^2}{2(\mu_0 + \mu A_k)}\|\nabla f(\vx_k)\|^2 + a_k' \innp{\nabla f(\vx_k), \vv_{k-1} - \vx_k}.
     \end{align*}
 We thus have the following:
    \begin{align*}
         A_k L_k - A_{k-1}L_{k-1} &= a_k f(\vx_k) + m_{k}(\vv_k) - m_k(\vv_{k-1})\\
        &\geq a_k f(\vx_k)  - \frac{{a_k}^2}{2(\mu_0 + \mu A_k)}\|\nabla f(\vx_k)\|^2 +  a_k' \innp{\nabla f(\vx_k), \vv_{k-1} - \vx_k}.
    \end{align*}
 On the other hand, using convexity of $f(\cdot)$ and that $f(\vy_k) \leq f(\vx_k) - \frac{1}{2L}\|\nabla f(\vx_k)\|^2$, we can bound the change in the upper bound sequence as:
    \begin{align*}
         A_k U_k - A_{k-1}U_{k-1} &= A_k f(\vy_{k}) - A_{k-1} f(\vy_{k-1})\\
            &= A_k (f(\vy_k) - f(\vx_k)) - A_{k-1}(f(\vy_{k-1}) - f(\vx_k)) + a_k f(\vx_k)\\
        & \leq -\frac{A_k}{2L}\|\nabla f(\vx_k)\|^2 + A_{k-1}\innp{\nabla f(\vx_k), \vx_k - \vy_{k-1}} + a_k f(\vx_k).
     \end{align*}
 Combining with the change in the lower bound sequence, we finally have:
    \begin{align*}
          A_k G_k - A_{k-1}G_{k-1} \leq & \Big(\frac{{a_k}^2}{2(\mu_0 + \mu A_k)} - \frac{A_k}{2L}\Big)\|\nabla f(\vx_k)\|^2\\
          &+ \innp{\nabla f(\vx_k), (A_{k-1} + a_k') \vx_{k} - A_{k-1} \vy_{k-1} - a_k' \vv_{k-1}},
    \end{align*}
    as claimed.
 \end{proof}
 
 Using Proposition~\ref{prop:initial-gap} and Lemma~\ref{lemma:gap-change}, we can deduce  sufficient conditions on the gradient query points $\{\vx_i\}_{i=0}^k$ and step sizes $\{a_i\}_{i=0}^k$ that lead to the optimal convergence rates of accelerated algorithms.
 
 \begin{theorem}\label{thm:accelerated-methods}
  Let the sequence of gradient query points $\{\vx_i\}_{i=0}^k$ be such that, for all $i \in \{1,\dots, k\}$:
  \begin{equation}\label{eq:cond-for-x}
    \innp{\nabla f(\vx_i), (A_{i-1} + a_i') \vx_{i} - A_{i-1} \vy_{i-1} - a_i' \vv_{i-1}} = 0,
  \end{equation}
  where points $\vv_i$ are defined by~\eqref{eq:def-of-vk}, points $\{\vy_i\}_{i=0}^k$ are such that $f(\vy_i) \leq f(\vx_i) - \frac{1}{2L}\|\nabla f(\vx_i)\|^2$, $a_i' = \frac{a_i(\mu_0 + \mu A_{i-1})}{\mu_0 + \mu A_i},$ and the sequence of scalars $\{a_i\}_{i=0}^k$ is defined by $a_0 = 1,$ $\frac{{a_i}^2}{A_i(\mu_0 + \mu A_i)} = \frac{1}{L},$ where $A_i = \sum_{j=0}^i a_j.$ Then:
   \begin{equation*}
     f(\vy_{k}) - f(\vx^*) \leq \min\bigg\{\frac{4 }{(k+1)(k+2)}, \Big(1-\sqrt{\frac{\mu}{L}}\Big)^{k}\bigg\}\frac{(L-\mu)\|\vx^* - \vx_0\|^2}{2}.
 \end{equation*}
 \end{theorem}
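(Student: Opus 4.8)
The plan is to exploit the two conditions imposed in the theorem to make the right-hand side of Lemma~\ref{lemma:gap-change} vanish identically, so that $A_k G_k$ is non-increasing, and then to convert the resulting bound $f(\vy_k) - f(\vx^*) \leq \frac{A_0 G_0}{A_k}$ into the stated rate by lower-bounding $A_k$. First I would rewrite the step-size rule $\frac{a_k^2}{A_k(\mu_0+\mu A_k)} = \frac{1}{L}$ in the equivalent form $\frac{a_k^2}{2(\mu_0+\mu A_k)} = \frac{A_k}{2L}$, which shows that the coefficient $\frac{a_k^2}{2(\mu_0+\mu A_k)} - \frac{A_k}{2L}$ multiplying $\|\nabla f(\vx_k)\|^2$ in Lemma~\ref{lemma:gap-change} is exactly zero. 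Condition~\eqref{eq:cond-for-x} kills the remaining inner-product term directly. Hence $A_k G_k - A_{k-1}G_{k-1} \leq 0$ for every $k \geq 1$, and telescoping together with Proposition~\ref{prop:initial-gap} gives $A_k G_k \leq A_0 G_0 \leq \frac{L-\mu}{2}\|\vx^*-\vx_0\|^2$. Since $G_k \geq f(\vy_k) - f(\vx^*)$ by construction, this yields $f(\vy_k) - f(\vx^*) \leq \frac{(L-\mu)\|\vx^*-\vx_0\|^2}{2A_k}$, reducing everything to estimating the growth of $A_k$.

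The crux is then to extract two lower bounds on $A_k$, one for each term inside the minimum, both read off from the step-size identity $a_k^2 = \frac{A_k(\mu_0+\mu A_k)}{L}$ with $\mu_0 = L-\mu$. For the $O(1/k^2)$ term, I would note that $A_k \geq A_0 = 1$ forces $\mu_0 + \mu A_k = L + \mu(A_k-1) \geq L$, so $a_k^2 \geq A_k$, i.e. $a_k \geq \sqrt{A_k}$. Writing $a_k = A_k - A_{k-1}$ and estimating $\sqrt{A_k} - \sqrt{A_{k-1}} = \frac{a_k}{\sqrt{A_k}+\sqrt{A_{k-1}}} \geq \frac{\sqrt{A_k}}{2\sqrt{A_k}} = \frac12$ gives $\sqrt{A_k} \geq 1 + \frac{k}{2}$, hence $A_k \geq \frac{(k+2)^2}{4} \geq \frac{(k+1)(k+2)}{4}$. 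For the geometric term, I would instead drop the nonnegative summand in $a_k^2 = \frac{\mu_0 A_k}{L} + \frac{\mu A_k^2}{L}$ to obtain $a_k \geq \sqrt{\mu/L}\,A_k$; then $A_{k-1} = A_k - a_k \leq (1-\sqrt{\mu/L})A_k$, so $A_k \geq \frac{A_{k-1}}{1-\sqrt{\mu/L}}$, and induction from $A_0 = 1$ gives $A_k \geq (1-\sqrt{\mu/L})^{-k}$.

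Combining the two estimates yields $\frac{1}{A_k} \leq \min\{\frac{4}{(k+1)(k+2)}, (1-\sqrt{\mu/L})^k\}$, which substituted into the gap bound from the first step produces the claimed inequality. I do not anticipate a genuine obstacle: once the telescoping observation is in place, the whole argument collapses to the two elementary one-line estimates $a_k \geq \sqrt{A_k}$ and $a_k \geq \sqrt{\mu/L}\,A_k$. The only point deserving a line of care is checking that the step-size recursion is well defined with $a_k > 0$, so that $A_k$ is strictly increasing and the $\sqrt{A_k}-\sqrt{A_{k-1}}$ manipulation is valid; this follows since substituting $A_k = A_{k-1}+a_k$ turns the defining relation into a quadratic in $a_k$ whose constant term is negative, guaranteeing a unique positive root.
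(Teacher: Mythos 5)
Your proposal is correct and follows essentially the same route as the paper's proof: the step-size identity $\frac{a_k^2}{2(\mu_0+\mu A_k)} = \frac{A_k}{2L}$ zeroes the coefficient of $\|\nabla f(\vx_k)\|^2$ in Lemma~\ref{lemma:gap-change}, condition~\eqref{eq:cond-for-x} zeroes the inner product, telescoping with Proposition~\ref{prop:initial-gap} yields $f(\vy_k)-f(\vx^*) \leq \frac{(L-\mu)\|\vx^*-\vx_0\|^2}{2A_k}$, and everything reduces to lower-bounding $A_k$. Your direct growth estimates ($\sqrt{A_k}-\sqrt{A_{k-1}} \geq \frac{1}{2}$ via $\mu_0 + \mu A_k \geq L$, and $a_k \geq \sqrt{\mu/L}\,A_k$) replace the paper's comparison against explicitly dominated sequences $\{B_i\}$, and as a minor bonus they establish both terms of the minimum simultaneously for all $\mu \geq 0$, whereas the paper argues the $1/k^2$ term only in the case $\mu = 0$; your closing remark on the positive root of the quadratic defining $a_k$ also makes explicit a well-definedness point the paper leaves implicit.
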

 \begin{proof}
  The assumptions of the theorem guarantee that, using Lemma~\ref{lemma:gap-change}, $A_k G_k - A_{k-1}G_{k-1} \leq 0.$ Hence, combining with Proposition~\ref{prop:initial-gap}, we have:
  $$
    f(\vy_k) - f(\vx^*) \leq G_k \leq \frac{A_0 G_0}{A_k} \leq \frac{(L-\mu)\|\vx^* - \vx_0\|^2}{2 A_k}.
  $$
  To complete the proof, it remains to argue about the growth of the sequence of positive numbers $\{A_i\}_{i=0}^k.$ First, when $\mu = 0,$ we have that $\mu_0 = L,$ and the sequence needs to satisfy $\frac{{a_i}^2}{A_i} = 1.$ Because this sequence dominates the sequence $\{B_i\}_{i=0}^k,$ where $B_i = \sum_{j=0}^i b_j$ and $b_j = \frac{j+1}{2}$ (as $\frac{{b_j}^2}{B_j} < 1$), we have that $A_k \geq B_k = \frac{(k+1)(k+2)}{4}.$ This gives the first term from the minimum in the statement of the theorem. 
  
  For the second term in the minimum, assume that $\mu > 0.$ Then, as $\frac{{a_i}^2}{A_i(\mu_0 + \mu A_i)} \leq \frac{{a_i}^2}{\mu {A_i}^2},$ the sequence of numbers $\{A_i\}_{i=0}^k$ dominates the sequence $\{B_i\}_{i=0}^k,$ defined by $B_i = \sum_{j=0}^i b_j,$ $b_0 = 1,$ and $\frac{{b_j}^2}{{B_j}^2} = \frac{\mu}{L}$ for $j \geq 1.$ Hence, $A_k \geq B_k = (1- \sqrt{\frac{\mu}{L}})^{-k},$ completing the proof.
 \end{proof}

\subsection{Accelerated Methods in the Approximate Duality Gap View}
 
In the rest of the note, we show how the convergence analysis of different accelerated methods follows as an application of Theorem~\ref{thm:accelerated-methods}. In what follows, we assume that the sequences $\{a_i\}_{i=0}^k,$ $A_i = \sum_{j=0}^i a_j$ are chosen according to Theorem~\ref{thm:accelerated-methods}. Thus, the only thing that remains to be satisfied is the choice of gradient query points $\{\vx_i\}_{i=0}^k$ and solution points $\{\vy_i\}_{i=0}^k$.

 \paragraph{Standard Nesterov Acceleration} 
 
 Nesterov accelerated method~\cite{Nesterov1983} as described in the textbook by the same author~\cite{nesterov2018lectures} defines $\vx_i,\, \vy_i$ by:
 \begin{equation}\notag
     \begin{gathered}
        \vx_i = \frac{A_{i-1}}{a_i' + A_{i-1}} \vy_{i-1} + \frac{a_i'}{a_i' + A_{i-1}} \vv_{i-1},\\
        \vy_i = \vx_i - \frac{1}{L}\nabla f(\vx_i).
     \end{gathered}
 \end{equation}
 It is immediate that this choice of points satisfies the assumptions of Theorem~\ref{thm:accelerated-methods}, and the accelerated convergence bound follows.

\paragraph{Nemirovski Acceleration with a Plane Search} For smooth convex minimization (with $\mu = 0$), Nemirovski accelerated method with a plane search~\cite{nemirovskii1983problem,nemirovski-line-search-acc} can be stated as\footnote{Note that some versions of Nemirovski's method minimize $f$ over $\vx = \vy_{i-1} + \alpha (\vy_{i-1}-\vx_0) + \beta \big(\frac{1}{L}\sum_{j=0}^{i-1} a_j \nabla f(\vx_j)\big)$. This change is irrelevant for the argument presented below, as it leads to $\innp{\nabla f(\vx_i), \vy_{i-1}-\vx_0} = 0$ and $\innp{\nabla f(\vx_i), \vv_{i-1}-\vx_0} = 0$. Same as argued below, this choice is sufficient for the condition from Eq.~\eqref{eq:cond-for-x} to hold.}:
\begin{equation}\notag
    \begin{gathered}
        \vx_i = \argmin \Big\{f(\vx): \vx = \alpha \vy_{i-1} + \beta \Big(\vx_0 - \frac{1}{L}\sum_{j=0}^{i-1} a_j \nabla f(\vx_j)\Big),\, \alpha, \beta \in \rr\Big\},\\
        \vy_i = \vx_i - \frac{1}{L}\nabla f(\vx_i).
    \end{gathered}
\end{equation}
 It is not hard to see (recalling the definition of $\vv_i$ from Eqs.~\eqref{eq:def-of-vk}, \eqref{eq:explicit-v-k} and $\mu_0 = L - \mu$), that this definition of $\vx_i$ in the special case of $\mu = 0$ precisely corresponds to
 $$
    \vx_i = \argmin \Big\{f(\vx): \vx = \alpha \vy_{i-1} + \beta \vv_{i-1},\, \alpha, \beta \in \rr\Big\}.
 $$
 Hence, using first-order optimality conditions in the definition of $\vx_i$, it follows that $\innp{\nabla f(\vx_i), \vy_{i-1}} = 0$ and $\innp{\nabla f(\vx_i), \vv_{i-1}} = 0,$ which is sufficient to satisfy the condition from Eq.~\eqref{eq:cond-for-x}, and, hence, Theorem~\ref{thm:accelerated-methods} applies.
 
 \paragraph{Nemirovski Acceleration with a Line Search} A further refinement of the method that replaces the plane search with a line search was also provided by Nemirovski~\cite{nemirovski-1D-line-search}, and can be stated as:
 \begin{equation}\notag
    \begin{gathered}
        \vx_i = \argmin \Big\{f(\vx): \vx = \vy_{i-1} + \beta \Big(\vx_0 - \vy_{i-1} - \frac{1}{L}\sum_{j=0}^{i-1} a_j \nabla f(\vx_j)\Big),\, \beta \in \rr\Big\},\\
        \vy_i = \vx_i - \frac{1}{L}\nabla f(\vx_i).
    \end{gathered}
\end{equation}
Again, recalling the definitions of $\vv_i$ and $\mu_0,$ in the case of $\mu=0,$ this choice of $\vx_i$ precisely corresponds to:
$$
    \vx_i = \argmin \Big\{f(\vx): \vx = (1-\beta) \vy_{i-1} + \beta \vv_{i-1},\, \beta \in \rr\Big\}.
$$
 Using the first-order optimality condition in the definition of $\vx_i,$ it follows that $\innp{\nabla f(\vx_i), \vv_{i-1} - \vy_{i-1}} = 0.$ A simple calculation reveals that this is sufficient to ensure that the condition from Eq.~\eqref{eq:cond-for-x} holds, which leads to the result from Theorem~\ref{thm:accelerated-methods}.
 
 \paragraph{Method of Conjugate Gradients} Finally, to obtain the accelerated convergence bound for~\eqref{eq:CG}, here we assume that $f(\vx) = \frac{1}{2}\innp{\mA \vx, \vx} - \innp{\vb, \vx},$ for some positive semidefinite matrix $\mA$, whose minimum and maximum eigenvalues are $\mu$ and $L,$ respectively. Given that $\vy_i = \argmin_{\vx \in \vx_0 + \cl_i}f(\vx),$ it follows that $f(\vy_i) \leq f(\vx) - \frac{1}{2L}\|\nabla f(\vx)\|^2,$ for any point $\vx \in \vx_0 + \cl_{i-1}.$ To apply Theorem~\ref{thm:accelerated-methods}, it suffices to show that $\vv_i \in \vx_0 + \cl_{i},$ $\forall i.$ For then, \emph{any} choice of $\vx_i$ given by the accelerated algorithms described above would belong to $\vx_0 + \cl_{i-1},$ which, given that $f(\vy_i) \leq f(\vx_i) - \frac{1}{2L}\|\nabla f(\vx_i)\|^2,$ leads to Theorem~\ref{thm:accelerated-methods}.
 
 The proof that $\vv_i \in \vx_0 + \cl_{i}$ is by induction on $i.$ Clearly, the claim holds initially, as $\vv_0 = \vx_0 - \frac{1}{L}\nabla f(\vx_0) \in \vx_0 + \cl_0.$ Suppose that the claim holds up to iteration $i-1.$ Then, for any of the choices of $\vx_{i}$ described above, we have that $\vx_{i}$ is a linear combination of $\vy_{i-1}$ and $\vv_{i-1}$; as both $\vy_{i-1}$ and $\vv_{i-1}$ are from $\vx_0 + \cl_{i-1},$ it follows that $\vx_i \in \vx_0 +  \cl_{i-1}.$ Observe that $\nabla f(\vx_i) = \mA \vx_i - \vb \in \cl_{i}.$ Recalling the explicit definition of $\vv_i$ from Eq.~\eqref{eq:explicit-v-k}, $\vv_i$ is the sum of a convex combination of $\vv_{i-1}$ and $\vx_i$, and a constant multiple of $\nabla f(\vx_i).$ As such, $\vv_i$ must belong to $\cl_{i},$ as claimed.
 
 Let us make one final observation. For~\eqref{eq:CG}, as each step minimizes the function over a direction that is orthogonal to the previous directions (see Lemma~\ref{lemma:alt-krylov-def}), we have that, at iteration $i+1$:
 $$
    f(\vy_i) - \frac{1}{2\ell_i}\|\nabla f(\vy_i)\|^2 \geq f(\vy_{i+1}) \geq f(\vx^*) \geq f(\vy_i) - \frac{1}{2\mu_i}\|\nabla f(\vy_i)\|^2,
 $$
 where $\mu_i$ and $\ell_i$ are the minimum and the maximum eigenvalue of $\mA$ over the complement of the subspace $\cl_i.$ Once the complement of $\cl_i$ becomes one-dimensional (which happens after at most $n-1$ iterations), it must be $\mu_i = \ell_i$, in which case $\vy_{i+1} = \vx^*.$ This is an alternative argument that shows that~\eqref{eq:CG} converges after at most $n$ iterations, using upper and lower bounds that are specific to quadratic functions and~\eqref{eq:CG}.

 \section{Conclusion}
 
 This note provides a simple and intuitive analysis of accelerated methods for smooth convex and smooth strongly convex minimization, including the method of conjugate gradients for quadratic programs, and using the Approximate Duality Gap Technique. Some interesting questions still remain. For example, it would be interesting to see whether \adgt~can be used to analyze the numerical stability of CG and suggest modifications that improve it.
 
 \section*{Acknowledgements}
 We thank Rasmus Kyng for pointing out a few typos in an earlier version of the note, and for his useful comments.  
 
 \bibliographystyle{abbrv}
 \bibliography{references.bib}
\end{document}